\newtheorem{theorem}{\bf Theorem}[section]
\newtheorem{proposition}[theorem]{\bf Proposition}
\newtheorem{definition}[theorem]{\bf Definition}
\newcommand{\proof}{\noindent{\bf Proof.\ }}
\newcommand{\qed}{\hfill $\square$ \bigskip}
\begin{document}
\title{Non covered vertices in Fibonacci cubes by a maximum set of disjoint hypercubes}
\author{
Michel Mollard\footnote{Institut Fourier, CNRS Universit\'e Grenoble Alpes, email: michel.mollard@univ-grenoble-alpes.fr}
}
\date{\today}
\maketitle

\begin{abstract}
\noindent The {\em Fibonacci cube} of dimension $n$, denoted as $\Gamma_n$,  is the subgraph of  $n$-cube $Q_n$ induced by vertices with no consecutive 1's. In this short note  we give an immediate proof that asymptotically all vertices of $\Gamma_n $ are covered by a maximum set of disjoint subgraphs isomorphic to $Q_k$, answering an open problem proposed in \cite{GMSZ} and solved with a longer proof in \cite{SE}. 
\end{abstract}
\noindent
{\bf Keywords:} Fibonacci cube, Fibonacci numbers. 

\noindent
{\bf AMS Subj. Class. (2010)}: 

\section{Introduction}
Let $n$ be a positive integer and denote $[n]=\{1,\ldots, n\}$, and $[n]_0=\{0,\ldots, n-1\}$. 
The {\em $n$-cube}, denoted as $Q_n$, is the graph with vertex set$$V(Q_n)=\{x_1x_2\ldots x_n \, | \, x_i\in [2]_0 ~\text{for}~ i\in [n]\}\,,$$ where 
two vertices 
are adjacent in $Q_n$ if the corresponding strings differ in exactly one position. The {\em Fibonacci $n$-cube}, denoted by $\Gamma_n$, is the subgraph of $Q_n$ induced by vertices with no consecutive 1's. Let $\{F_n\}$ be the {\em Fibonacci numbers}: $F_0 = 0$, $F_1=1$, $F_{n} = F_{n-1} + F_{n-2}$ for $n\geq 2$. The number of vertices of $\Gamma_n$ is $|V(\Gamma_n)| = F_{n+2}$.
Fibonacci cubes have been investigated from many points of view and we refer to the survey \cite{survey} for more information about them. Let $q_k(n)$ be the maximum number of disjoint subgraphs isomorphic to $Q_k$ in $\Gamma_n$. This number is studied in a recent paper \cite{GMSZ}. The authors obtained the following recursive formula

\begin{theorem}
		\label{th-q_k,n}
		For every $k\geq 1$ and $n\geq 3$
				$q_{k}(n) = q_{k-1}(n-2) + q_{k}(n-3)$.
\end{theorem}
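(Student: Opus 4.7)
My strategy is to prove the two inequalities separately using the canonical decomposition of $V(\Gamma_n)$ induced by the first two coordinates. Set
$$V_1 = \{00x, 10x \, : \, x \in V(\Gamma_{n-2})\} \quad \text{and} \quad V_2 = \{010y \, : \, y \in V(\Gamma_{n-3})\},$$
so that $V(\Gamma_n) = V_1 \sqcup V_2$, the subgraph $\Gamma_n[V_1]$ is isomorphic to the prism $K_2 \cp \Gamma_{n-2}$ (with position $1$ providing the $K_2$-edges), and $\Gamma_n[V_2]$ is isomorphic to $\Gamma_{n-3}$.

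For the lower bound $q_k(n) \geq q_{k-1}(n-2) + q_k(n-3)$, I would first take a maximum collection of $q_{k-1}(n-2)$ pairwise disjoint $Q_{k-1}$s in $\Gamma_{n-2}$; for each such $Q_{k-1}$ on vertex set $S$, the set $\{00x, 10x \, : \, x \in S\}$ induces a $Q_k \subseteq V_1$ (obtained by promoting position $1$ to a free coordinate). Independently, I would embed $q_k(n-3)$ pairwise disjoint $Q_k$s of $\Gamma_{n-3}$ into $V_2$ by prefixing $010$. Since $V_1$ and $V_2$ are disjoint, the two families together give $q_{k-1}(n-2) + q_k(n-3)$ pairwise disjoint $Q_k$s in $\Gamma_n$.

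For the upper bound, I would encode each member of a maximum disjoint family $\mathcal{F}$ by its pattern $s \in \{0,1,*\}^n$, with $*$ marking the $k$ free coordinates and with the constraint that no two consecutive symbols both lie in $\{1,*\}$. Classifying by the first two symbols yields three cases: (i) $s$ begins with $00$, $1$, or $*$, in which case the $Q_k$ lies entirely in $V_1$; (ii) $s$ begins with $01$, forcing $s_3 = 0$, in which case the $Q_k$ lies in $V_2$; and (iii) $s$ begins with $0*$, forcing $s_3 = 0$, in which case the $Q_k$ \emph{spans} $V_1$ and $V_2$, contributing a $Q_{k-1}$ in $000\Gamma_{n-3} \subseteq V_1$ and a $Q_{k-1}$ in $V_2$. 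Once the type-(iii) contributions are eliminated, $\mathcal{F}$ splits as $\mathcal{F}_1 \sqcup \mathcal{F}_2$ with $\mathcal{F}_i \subseteq V_i$, and one bounds $|\mathcal{F}_2| \leq q_k(n-3)$ directly and $|\mathcal{F}_1| \leq q_k(K_2 \cp \Gamma_{n-2}) = q_{k-1}(n-2)$ via the short observation that every prism-spanning $Q_k$ projects to a $Q_{k-1}$ in one layer and that single-layer $Q_k$s can always be promoted to spanning ones without loss.

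The main obstacle is handling case (iii): a spanning $Q_k$ with pattern $0*0t$ uses vertices $000t$ in $V_1$ and $010t$ in $V_2$, so it is not covered by either of the separate bounds. My plan is an exchange argument showing that in a maximum family one may assume no such $Q_k$ occurs, by replacing it with a type-(i) $Q_k$ of pattern $*00t$ (which reuses the $000t$ region inside $V_1$ and adds $100t$), after verifying the $100t$ vertices are free, or otherwise redistributing into $V_2$. Making this exchange watertight—in particular ruling out that the swap conflicts with other $Q_k$s in $\mathcal{F}$, possibly by iterating the procedure or appealing to induction on $n$—is where I expect the technical difficulty to lie.
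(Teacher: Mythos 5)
A preliminary remark: this note does not actually prove Theorem~\ref{th-q_k,n}; it imports it from \cite{GMSZ}, so there is no in-paper proof to compare against. Judged on its own merits, your proposal establishes the lower bound $q_k(n)\geq q_{k-1}(n-2)+q_k(n-3)$ cleanly and completely: the decomposition $V(\Gamma_n)=V_1\sqcup V_2$ is correct, the lift of a $Q_{k-1}$ of $\Gamma_{n-2}$ to the pattern $*0t$ and the embedding of $\Gamma_{n-3}$ as $010\Gamma_{n-3}$ are both valid, and the pattern encoding of $Q_k$-subgraphs (legal since every $Q_k$-subgraph of $Q_n$ is a subcube, and a subcube lies in $\Gamma_n$ iff no two consecutive symbols are in $\{1,*\}$) is the right language. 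The identity $q_k(K_2\cp\Gamma_{n-2})=q_{k-1}(n-2)$ that you invoke is also true, though not for the reason you give --- a single-layer $Q_k$ cannot be ``promoted'' to a spanning one when the opposite layer is occupied; the correct short argument is double counting: with $a$ spanning and $b_0,b_1$ single-layer cubes, each layer carries $a+2b_i$ disjoint $Q_{k-1}$'s, so $2(a+b_0+b_1)\leq 2q_{k-1}(n-2)$.

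The genuine gap is exactly where you locate it: case (iii), the cubes with pattern $0{*}0t$ that straddle $V_1$ and $V_2$. The proposed swap $0{*}0t\mapsto{*}00t$ is not available in general, because the freshly needed vertices $100u$ can be occupied by members of $\mathcal F$ with patterns of the form $10\rho$ (disjointness of $\mathcal F$ only rules out conflicts with patterns starting $*0$, not those starting $10$), and a single swap can collide with up to $2^{k-1}$ such members, so the iteration has no evident termination measure. The fallback of ``redistributing into $V_2$'' cannot work either: the $V_2$-trace of a straddling cube is only a $Q_{k-1}$, and there is no guaranteed free direction in $\Gamma_{n-3}$ to complete it to a $Q_k$. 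Nor does double counting rescue the situation: writing $a$, $b$, $c$ for the numbers of $V_1$-cubes, $V_2$-cubes and straddling cubes, the layer counts only give $a+c/2\leq q_{k-1}(n-2)$ and $2b+c\leq q_{k-1}(n-3)$, and the resulting bound $a+b+c\leq q_{k-1}(n-2)+q_{k-1}(n-3)/2$ does not reduce to the claim because $q_{k-1}(m)\leq 2q_k(m)$ fails in general (e.g.\ $q_0(m)=F_{m+2}>2q_1(m)$ when $m\not\equiv 1\pmod 3$). Eliminating or correctly accounting for the straddling cubes is the entire difficulty of the upper bound --- it is why the original proof in \cite{GMSZ} is nontrivial --- and your proposal acknowledges but does not supply this step, so it does not yet constitute a proof.
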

In \cite{SE} Elif Sayg{\i} and \" Omer E\v gecio\v glu, solved an open problem proposed by the authors of \cite{GMSZ}. They proved that asymptotically all vertices of $\Gamma_n $ are covered by a maximum set of disjoint subgraphs isomorphic to $Q_k$ thus that 
\begin{theorem}
		\label{th-mainresult}
For every $k\geq 1$,  $\lim_{n\to \infty} \frac{	q_{k}(n) }{|V(\Gamma_n)|} =\frac{1}{2^k}$.
\end{theorem}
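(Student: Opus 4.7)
The plan is to sandwich the ratio $r_k(n) := q_k(n)/F_{n+2}$ between $1/2^k$ from both sides. The upper bound is immediate: since $q_k(n)$ pairwise disjoint copies of $Q_k$ occupy $2^k q_k(n)$ distinct vertices of $\Gamma_n$, one has $q_k(n) \leq F_{n+2}/2^k$, hence $r_k(n) \leq 1/2^k$ for all $n$ and in particular $\limsup_n r_k(n) \leq 1/2^k$.

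For the matching lower bound I would induct on $k$, leveraging the recurrence in Theorem~\ref{th-q_k,n} together with the standard asymptotics $F_m/F_{m+s} \to \phi^{-s}$, where $\phi = (1+\sqrt{5})/2$ is the golden ratio. The base case $k=0$ is trivial, since $q_0(n) = F_{n+2}$. For the inductive step, assume $r_{k-1}(n) \to 1/2^{k-1}$. Dividing the recurrence by $F_{n+2}$ gives
\[
r_k(n) \;=\; r_{k-1}(n-2)\,\frac{F_n}{F_{n+2}} \;+\; r_k(n-3)\,\frac{F_{n-1}}{F_{n+2}},
\]
whose coefficients tend to $\phi^{-2}$ and $\phi^{-3}$ respectively. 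Setting $\alpha = \liminf_n r_k(n)$ and extracting a subsequence $n_j \to \infty$ along which $r_k(n_j) \to \alpha$, the inductive hypothesis forces the first summand along this subsequence to converge to $\phi^{-2}/2^{k-1}$; the second summand must then converge as well, giving
\[
r_k(n_j - 3) \;\longrightarrow\; \phi^{3}\alpha \;-\; \phi/2^{k-1}.
\]
Since $r_k(n_j - 3) \geq \alpha - o(1)$ by the very definition of $\liminf$, and $\phi^3 - 1 = 2\phi$, this rearranges to $\alpha \geq 1/2^k$, which combined with the upper bound yields the claim.

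The one non-routine step is the $\liminf$ extraction above: one has to convert the subsequential limit $r_k(n_j) \to \alpha$ into a genuine lower bound involving the shifted quantity $r_k(n_j-3)$, and the algebra closes only because the numerical coincidences $\phi^{-2} + \phi^{-3} = \phi^{-1}$ and $\phi^{3} - 1 = 2\phi$ make the factor $1/2$ appear cleanly at each step of the induction. Everything else reduces to bookkeeping on Fibonacci ratios, which is why the whole argument can be made short.
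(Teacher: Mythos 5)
Your proof is correct, but it is genuinely different from the one in the paper. The paper works with the exact count of uncovered vertices $P_k(n)=F_{n+2}-2^kq_k(n)$: it derives from Theorem~\ref{th-q_k,n} the recursion $P_{k+1}(n)=2P_k(n-2)+P_{k+1}(n-3)$, proves by induction (via Faulhaber's formula) that on each residue class of $n$ modulo $3$ the quantity $P_k(n)$ is a polynomial in $n$ of degree at most $k-1$, and then concludes because $F_{n+2}$ grows like $\phi^n$. You instead run a sandwich argument directly on the ratio $r_k(n)=q_k(n)/F_{n+2}$: the trivial upper bound $r_k(n)\le 2^{-k}$ plus a $\liminf$ extraction on the normalized recurrence, closed by the identity $\phi^3-1=2\phi$. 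I checked the algebra: $r_k(n)=r_{k-1}(n-2)\tfrac{F_n}{F_{n+2}}+r_k(n-3)\tfrac{F_{n-1}}{F_{n+2}}$ is right, the subsequential limit $r_k(n_j-3)\to\phi^3\alpha-\phi/2^{k-1}$ is legitimate because the coefficient $F_{n_j-1}/F_{n_j+2}$ tends to the nonzero limit $\phi^{-3}$, and $\liminf_j r_k(n_j-3)\ge\alpha$ does give $(\phi^3-1)\alpha\ge\phi/2^{k-1}$, hence $\alpha\ge 2^{-k}$. Two trade-offs are worth noting. First, your argument yields only the limit, whereas the paper's yields the sharper quantitative statement that the number of uncovered vertices is $O(n^{k-1})$; your route is shorter and avoids introducing the auxiliary sequence $P_k$, but it loses that refinement. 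Second, your base case invokes $q_0(n)=F_{n+2}$ together with the $k=1$ instance of Theorem~\ref{th-q_k,n}; this is consistent with the theorem as stated (which asserts the recurrence for all $k\ge1$ and hence presupposes $q_0$), but if you want to avoid relying on the convention for $q_0$ you can instead anchor the induction at $k=1$ using the fact, quoted in the paper, that $\Gamma_n$ has a perfect or near-perfect matching, so that $r_1(n)\to 1/2$ directly.
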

The ingenious, but long, proof they proposed is a nine cases study of the decomposition of the generating function of $q_{k}(n)$. 
The purpose of this short note is to deduce from Theorem \ref{th-q_k,n} a recursive formula for the number of non covered vertices by a maximum set of disjoint hypercubes. We obtain as a consequence an immediate proof of Theorem \ref{th-mainresult}.
 \section{Number of non covered vertices}
\begin{definition}
Let $\{P_k(n)\}^\infty_{k=1}$ be the family of sequences of integers defined by\\ 
(i)$P_k(n+3)=P_k(n)+2P_{k-1}(n+1)$ for $k\geq2$ and $n\geq 0$\\
(ii)$P_k(0)=1$,$P_k(1)=2$,$P_k(2)=3$, for $k\geq2$\\
(iii)$P_1(n)=0$ if $n\equiv1[3]$ and $P_1(n)=1$ if $n\equiv0[3]$ or $n\equiv2[3]$.\\
\end{definition}
Solving the recursion consecutively for the first values of $k$ and each class of $n$ modulo 3 we  obtain the first values of $P_k(n)$.
		\begin{table}[htb]
			\begin{center}
				
				\begin{tabular}{|c|c c c|}
			     	\hline
					\textit{n mod 3} & 0 & 1& 2 \\\hline
					$P_{1}(n)$ & 1 & 0 & 1 \\
					$P_{2}(n)$ & 1 & $\frac{2}{3}n+\frac{4}{3}$&$ \frac{2}{3}n+\frac{5}{3}$ \\
					$P_{3}(n)$ & $\frac{2}{9}n^2+\frac{2}{3}n+1$ & $\frac{2}{9}n^2+\frac{8}{9}n+\frac{8}{9}$&$ \frac{2}{3}n+\frac{5}{3}$ \\
					$P_{4}(n)$ & $\frac{4}{81}n^3+\frac{2}{9}n^2+\frac{2}{9}n+1$ & $\frac{2}{9}n^2+\frac{8}{9}n+\frac{8}{9}$& $\frac{4}{81}n^3+\frac{4}{27}n^2+\frac{10}{27}n+\frac{103}{81}$ \\\hline
				\end{tabular}
			\caption{$P_k(n)$ for $k=1,\dots,4$}
				\label{tab:initial}
			\end{center}
		\end{table}
\begin{proposition}
Let $n=3p+r$ with $r=0,1$ or $2$. For a fixed $r$, $P_k(n)$ is a polynomial in $n$ of degree at most $k-1$.
\end{proposition}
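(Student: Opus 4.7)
The natural approach is induction on $k$. The base case $k=1$ is immediate from (iii): on each residue class $r \in \{0,1,2\}$ modulo $3$, $P_1(n)$ is constant (either $0$ or $1$), hence a polynomial of degree $0=k-1$ in $n$.

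For the inductive step, fix $k\geq 2$ and a residue $r\in\{0,1,2\}$, and set $a_p := P_k(3p+r)$ for $p\geq 0$. Applying (i) with $n=3p+r$ gives
$$
a_{p+1}-a_p \;=\; P_k(3p+r+3)-P_k(3p+r) \;=\; 2\,P_{k-1}(3p+r+1).
$$
The crucial observation is that $3p+r+1$ always lies in the single residue class $(r+1) \bmod 3$, independently of $p$. Hence the inductive hypothesis applies uniformly to the right-hand side and $P_{k-1}(3p+r+1)$ is a polynomial in $3p+r+1$, and thus in $p$, of degree at most $k-2$.

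Therefore the forward difference $\Delta a_p$ is a polynomial in $p$ of degree at most $k-2$. By the standard fact that the antidifference of a polynomial of degree $d$ is a polynomial of degree $d+1$ (with the additive constant pinned down by the initial value $a_0=P_k(r)$ supplied by (ii)), we conclude that $a_p$ is a polynomial in $p$ of degree at most $k-1$. Since $p=(n-r)/3$ is an affine function of $n$ on this residue class, $P_k(n)$ is a polynomial in $n$ of degree at most $k-1$, completing the induction. I do not foresee any real obstacle: the argument is a routine transfer of the linear recurrence to the finite-difference setting, and the only point to be watchful about is precisely the residue-class bookkeeping that allows the inductive hypothesis to be invoked in a single form on the right-hand side.
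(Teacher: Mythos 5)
Your proof is correct and is essentially the paper's argument in finite-difference clothing: both proceed by induction on $k$, unroll recurrence (i) into a sum of $P_{k-1}$ values along a single residue class, and invoke the fact that summing a polynomial of degree $d$ raises the degree by one (Faulhaber). Your reparametrization $a_p=P_k(3p+r)$ is a slightly tidier way of handling the restriction to a residue class than the paper's auxiliary function $Q'$, but the substance is identical.
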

\begin{proof}
From (i) we can write $$P_k(n)=2\sum_{i=0}^{p-1}P_{k-1}(n-2-3i)+P_k(r).$$
For any integer $d$ the classical Faulhaber's formula expresses the sum  $\sum_{m=0}^{n}m^d$ as a polynomial in $n$ of degree $d+1$. Thus if $Q(n)$ is a polynomial of degree at most $d$ then  $\sum_{m=0}^{n}Q(m)$ is a polynomial in $n$ of degree at most $d+1$. Let $Q'(m)=Q(m)$ if $m\equiv0[3]$ and $0$ otherwise. Applying this to $Q'$ we obtain that $\sum_{m=0,m\equiv 0[3]}^{n}Q(m) $ is also a polynomial in $n$ of degree at most $d+1$.
Thus if  $P_{k-1}(n)$ is a polynomial in $n$ of degree at most $k-2$ then $\sum_{i=0}^{p-1}P_{k-1}(n-2-3i)$ is a polynomial of degree at most $k-1$. Since for a fixed $r$ $P_1(n)$ is a constant, by induction on $k$, $P_k(n)$ is a polynomial in $n$ of degree at most $k-1$.
\end{proof}
\qed
\begin{theorem}The number of non covered vertices of $\Gamma_n$ by $q_k(n)$ disjoint $Q_k$'s is  $P_k(n)$.\\
\end{theorem}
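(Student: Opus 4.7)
The plan is to introduce $N_k(n) := F_{n+2} - 2^k q_k(n)$, which by construction counts the uncovered vertices in $\Gamma_n$, and then show that $N_k$ satisfies the same defining relations as $P_k$. The entire argument rests on the elementary Fibonacci identity
$$F_{n+2} = 2F_n + F_{n-1},$$
obtained from two applications of the Fibonacci recurrence.

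For the main algebraic step, I would apply Theorem \ref{th-q_k,n} to write
$$N_k(n) = F_{n+2} - 2^k q_{k-1}(n-2) - 2^k q_k(n-3),$$
and then substitute $2^k q_{k-1}(n-2) = 2F_n - 2N_{k-1}(n-2)$ and $2^k q_k(n-3) = F_{n-1} - N_k(n-3)$. The Fibonacci identity above cancels the constant part and leaves
$$N_k(n) = N_k(n-3) + 2 N_{k-1}(n-2),$$
which is exactly relation (i) in the definition of $P_k$ after the shift $n \mapsto n+3$.

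To finish, I would check the initial values. For $k \geq 2$, the graphs $\Gamma_0, \Gamma_1, \Gamma_2$ have only $1, 2, 3$ vertices respectively and therefore contain no copy of $Q_k$, so $q_k(n) = 0$ for $n \in \{0,1,2\}$ and $N_k(n) = F_{n+2}$, matching (ii). The case $k=1$ has to be handled separately, since relation (i) starts at $k \geq 2$. Here I would specialize Theorem \ref{th-q_k,n} using $q_0(n) = F_{n+2}$ (single vertices form copies of $Q_0$) to get $q_1(n) = F_n + q_1(n-3)$; the same Fibonacci identity then forces $N_1(n) = N_1(n-3)$, so $N_1$ is periodic modulo $3$. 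A direct inspection of $\Gamma_0, \Gamma_1, \Gamma_2$ with $q_1(0)=0$, $q_1(1)=1$, $q_1(2)=1$ gives $N_1(0)=1$, $N_1(1)=0$, $N_1(2)=1$, matching (iii).

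The computations are essentially one-line manipulations of the Fibonacci identity, so the only subtlety is remembering to treat $k=1$ separately from the inductive recurrence for $k \geq 2$; once both base cases are in place, an easy induction on $k$ (for each fixed residue of $n$ modulo $3$) finishes the proof.
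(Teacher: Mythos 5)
Your proof is correct and follows essentially the same route as the paper: both rewrite the uncovered-vertex count $F_{n+2}-2^{k}q_k(n)$ via Theorem \ref{th-q_k,n} and the identity $F_{n+2}=2F_n+F_{n-1}$ to recover recurrence (i), and both check the base cases $n\in\{0,1,2\}$ by noting $\Gamma_n$ contains no $Q_k$ there. The only divergence is at $k=1$: the paper cites the known fact that $\Gamma_n$ has a perfect matching exactly when $n\equiv 1\ (\mathrm{mod}\ 3)$ and a near-perfect matching otherwise, whereas you derive (iii) from the recurrence itself under the convention $q_0(n)=F_{n+2}$ --- slightly more self-contained, but it silently relies on Theorem \ref{th-q_k,n} being valid at $k=1$ with that reading of $q_0$, which is consistent with how the paper states it.
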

\proof{}
This is true for $k=1$ since  the Fibonacci cube $\Gamma_n$ has a perfect matching for $n \equiv1[3]$ and a maximum matching missing a vertex otherwise.\\
For $k>1$ this is true for $n=0,1,2$ since the values of $P_k(n)$ are respectively 1,2,3 thus are equal to $|V(\Gamma_n)|$ and there is no $Q_k$ in $\Gamma_n $.\\
Assume the property is true for some $k\geq1$ and any $n$. Then consider $k+1$. By induction on $n$ we can assume that the property is true for $\Gamma_{n-3} $. Let us prove it for $\Gamma_{n} $.\\
From Theorem \ref{th-q_k,n} we have $q_{k+1}(n)=q_{k}(n-2)+q_{k+1}(n-3)$.\\
Thus the number of  non covered vertices of $\Gamma_{n}$ by $q_{k+1}(n)$ disjoint $Q_{k+1}$'s is $$|V(\Gamma_n)|-2^{k+1}q_{k+1}(n)= F_{n+2}-2^{k+1}[q_{k}(n-2)+q_{k+1}(n-3)]= F_{n+2}-2\cdot2^{k}q_{k}(n-2)-2^{k+1}q_{k+1}(n-3).$$
Using  equalities $P_k(n-2)=F_{n}-2^kq_k(n-2)$ and $P_{k+1}(n-3)=F_{n-1}-2^{k+1}q_{k+1}(n-3)$ we obtain 
$$|V(\Gamma_n)|-2^{k+1}q_{k+1}(n)= F_{n+2}+2(P_{k}(n-2)-F_n)+P_{k+1}(n-3)-F_{n-1}.$$
From $F_{n+2}-2F_{n}-F_{n-1}=0$ and $2 P_{k}(n-2)+P_{k+1}(n-3)=P_{k+1}(n)$ the number of  non covered vertices is $P_{k+1}(n)$. So the theorem is proved.\\
\qed

For any $k$, since the number of non covered vertices is polynomial in $n$ and $|V(\Gamma_n)|=F_{n+2}
\sim\frac{3+\sqrt{5}}{2\sqrt{5}}(\frac{1+\sqrt{5}}{2})^{n}$ we obtain, like in  \cite{SE}, that
 $$\lim_{n\to \infty} \frac{	P_{k}(n) }{|V(\Gamma_n)|}=0$$ thus $$\lim_{n\to \infty} \frac{	q_{k}(n) }{|V(\Gamma_n)|}=\frac{1}{2^k}$$ \\

\end{document}